\documentclass[11pt]{article}
\usepackage{amsmath}
  \usepackage{paralist}
  \usepackage{graphics}
  \usepackage{epsfig}
  \usepackage{float}
  \usepackage{graphicx}
  \usepackage{abstract}
 \usepackage{epstopdf}
 \usepackage{amssymb}
 \usepackage{cite}
 \usepackage{mathrsfs}
 \usepackage{amsthm}
 \usepackage[colorlinks=true]{hyperref}
\hypersetup{urlcolor=blue, citecolor=red}

  \textheight=8.2 true in
   \textwidth=5.0 true in
    \topmargin 30pt
     \setcounter{page}{1}

\numberwithin{equation}{section}

\newtheorem{theorem}{Theorem}[section]

\newtheorem{lemma}[theorem]{Lemma}

\newtheorem{physical conclusion}{Physical Conclusion}

\newtheorem{definition}[theorem]{Definition}

\title{Global Strong Solution of a 2D  coupled Parabolic-Hyperbolic Magnetohydrodynamic System}
\author{ Ruikuan Liu\thanks{Email:liuruikuan2008@163.com. Supported by NSFC(11401479)
} \ \ \ \  Jiayan Yang\thanks{Corresponding author:jiayan\_{}1985@163.com;}
\\ \footnotesize $^{*,\dag}$Department of Mathematics, \footnotesize  Sichuan University
 Chengdu, \\  \footnotesize Sichuan 610064, P.R.China
 \\ \footnotesize $^\dag$Department of Mathematics,~Southwest Medical University  Luzhou,\\ \footnotesize
Sichuan 646000, P.R.China}

\begin{document}
\date{}
\maketitle
\begin{abstract}
The main objective of this paper is to study
the global strong solution of the parabolic-hyperbolic  incompressible magnetohydrodynamic (MHD) model in two dimensional space.
Based on  Agmon, Douglis and  Nirenberg's estimates for the stationary Stokes equation  and the Solonnikov's  theorem  of $L^p$-$L^q$-estimates for the evolution Stokes equation, it is
shown that the mixed-type  MHD equations exist a global strong solution.
\begin{center}
\textbf{\normalsize keywords}
\end{center}
Global strong solution,  Magnetohydrodynamics,  Stokes equation, $L^p$-$L^q$-estimates.
\end{abstract}

\section{Introduction}
We consider the following 2-D incompressible magnetohydrodynamic (MHD)
model, which describes the interaction between moving conductive
fluids and electromagnetic fields in \cite{LY},
\begin{eqnarray}\label{Eq}
\left\{
   \begin{aligned}
 & \frac{\partial u}{\partial t}+(u\cdot\nabla)u=\nu\Delta u-\frac{1}{\rho_0}\nabla p+\frac{\rho_e}{\rho_0}u\times \text{rot}A+f(x), &\text{in}\ \Omega\times [0,T),
 \\&\frac{\partial^2A}{\partial t^2}=\frac{1}{\epsilon_0\mu_0}\Delta A+\frac{\rho_e}{\epsilon_0}u-\nabla \Phi, & \text{in}\  \Omega\times [0,T),
 \\&  \nabla\cdot u=0,& \text{in}\  \Omega\times [0,T),
 \\& \nabla\cdot A=0,& \text{in} \ \Omega\times [0,T).
\end{aligned}
\right.
\end{eqnarray}
Here  $\Omega\subset\mathbb{R}^2$ is a bounded smooth domain, $T$ is any fixed time.  $u(x,t)$, $A(x,t)$, $p(x,t)$ are the velocity field, the magnetic potential and the pressure function, respectively.  $\Phi=\frac{\partial A_0}{\partial t}$ represents the magnetic pressure with the scalar electromagnetic potential $A_0$. The constants $\nu$, $\rho_0$, $\rho_e$, $\epsilon_0$, $\mu_0$ denote  kinetic viscosity,  mass density,  equivalent charge density,  electric permittivity and  magnetic permeability of free space.

In this paper, we focus on the system (\ref{Eq}) with the initial-boundary conditions
\begin{equation}\label{IN}
  u(0,x)=\phi(x),\ \ A(0,x)=\psi(x),\ \ A_t(0,x)=\eta(x), \quad \text{in}\ \Omega,
\end{equation}
\begin{equation}\label{BO}
  u(t,x)=0, \ \ \ A(t,x)=0,\ \ \ \text{on} \ \ \partial\Omega\times[0,T).
\end{equation}

Note that the MHD model (\ref{Eq}) is established based on the the Newton's second law and the Maxwell equations for the electromagnetic fields in \cite{LY}. In addition, the global weak solutions of the corresponding 3-D MHD model(\ref{Eq}) with (\ref{IN})-(\ref{BO}) has been  obtained
 by using the Galerkin technique and standard energy estimates in \cite{LY}. In this paper, what we are concerned is the global strong solution of the 2-D MHD model (\ref{Eq}) with the initial-boundary conditions (\ref{IN})$-$(\ref{BO}).

It is known that there have been huge mathematical studies on the existence of solutions to the N-dimension($N\geq2$) classical  MHD model established by  Chandrasekhar \cite{C}. In particular, Duvaut and Lions \cite{DL} constructed a global weak solution and the local strong solution to the 3-D classical MHD equations the initial boundary value problem, and properties of such solutions have been investigated by Sermange and Temam in \cite{ST}. Furthermore, some sufficient conditions for smoothness were presented for the weak solution to the 3-D classical MHD equations in \cite{HX} and some sufficient conditions of local regularity of suitable weak solutions to the 3-D classical MHD system for the points
belonging to a $C^3$-smooth part of the boundary  were obtained in \cite{V}. Also, the global strong solutions for  heat conducting 3-D classical magnetohydrodynamic flows  with non-negative density  were proved in \cite{ZX}.

Moreover, let's recall some known results for the 2-D classical and generalized  MHD equations.
It is noticed that the 2D classical MHD equations  admits a
unique global strong solution in \cite{DL,ST}. Furthermore, Ren, Wu, et.al \cite{RWXZ} have proved the global existence and the decay estimates of small smooth solution for the 2-D classical MHD equations without magnetic diffusion and Cao, Regmi and Wu \cite{CW} have obtained the global regularity for the 2-D classical MHD equations with mixed partial dissipation and magnetic diffusion. Besides, Regmi \cite{R} established the global weak solution for 2-D classical MHD equations with partial dissipation and vertical diffusion. There are also very interesting investigations about the existence of strong solutions to the 2-D classical and generalized  MHD equations, see \cite{HW,JNW,MA,ST,W,W2003,Y}  and references therein.

 However, it is worth pointing out that the incompressible MHD system (\ref{Eq}) is a  mixed-type differential difference equation, which is combined with the parabolic equation
(\ref{Eq})$_1$ and the hyperbolic equation (\ref{Eq})$_2$. The main challenge in obtaining global strong solution of 2-D MHD model(\ref{Eq}) with (\ref{IN})-(\ref{BO})
is the estimate
for $||u\times\text{rot}A||_{L^{\infty}(0,T;L^{2})}$ and $||(u\cdot \nabla)u||_{L^{\infty}(0,T;L^{2})}$. The difficulty is overcome by
applying the Solonnikov's theorem \cite{GH,M,S}   of $L^p-L^q$-estimates for the non-stationary Stokes  equations and  Agmon, Douglis and  Nirenberg's estimates \cite{AG,ADN,M} for the stationary Stokes equations. As is known, Solonnikov \cite{S} first gave the  proof of Maximal $L^p$-$L^q$-estimates for the Stokes equation (\ref{st}) using potential theoretic arguments. Recently,  Geissert,   Hess,  Hieber et.al \cite{GH} provided  a short proof of the corresponding Solonnikov's theorem in \cite{S}.

\vskip 3mm

The rest of this article is organized as follows. In Section 2, we introduce some elementary function spaces, a vital embedding theorem and some regularity results of both the non-stationary Stokes equations and stationary  Stokes equations. Section 3 is mainly devoted to the proof of global strong solution of (\ref{Eq})$-$(\ref{BO}).

\section{Preliminaries}
\subsection{Notations and definitions}
First, we introduce some notations and conventions used throughout this paper.

Let $\Omega\subset\mathbb{R}^2$ be a bounded sufficiently smooth domain.  Let $H^\tau(\Omega)(\tau=1,2)$ be the general Sobolev space on $\Omega$ with
the norm $||\cdot||_{H^\tau}$ and $L^2(\Omega)$ be the Hilbert space with the usual norm
$||\cdot||$. The space $H^1_0(\Omega)$  we mean that the completion of $C^\infty_0(\Omega)$ under the norm $||\cdot||_{H^1}$. If $\digamma$ is a Banach space, we denote by $L^p(0,T;\digamma)(1<p<\infty)$ the Banach space of the $\digamma$-valued functions defined
in the interval $(0,T)$ that are $L^p$-integrable.

We also consider the following spaces of divergence-free functions (see Temam \cite{T})
\begin{eqnarray*}
\begin{aligned}
  X&=\{u\in C^\infty_0(\Omega,\mathbb{R}^2)\  |\  \text{div}u=0 \ \ \text{in} \  \Omega\},
  \\Y&= \text{the closure of}\  X \  \text{in}\  L^2(\Omega, \mathbb{R}^2)
  \\&=\{u\in L^2(\Omega,\mathbb{R}^2) \ |\  \text{div}u=0 \ \ \text{in} \  \Omega\},
  \\Z&=\text{the closure of} \ X \  \text{in}\  H^1(\Omega, \mathbb{R}^2)
  \\&=\{u\in H^1_0(\Omega,\mathbb{R}^2) \ |\  \text{div}u=0 \ \ \text{in} \  \Omega\}.
\end{aligned}
\end{eqnarray*}

\begin{definition}
Suppose that $\phi,\eta\in Y$, $\psi\in Z$. For  any  $T>0$, a vector function $(u, A)$ is called a global weak solution of problem ({\ref{Eq}})$-$(\ref{BO}) on $(0, T)\times\Omega$
 if it satisfies the following conditions:
\begin{enumerate}
  \item $u\in L^2(0,T; Z)\cap L^\infty(0,T; Y),$
  \item $A\in L^\infty(0,T; Z),\  A_t\in L^\infty(0,T; Y),$
  \item For any function $v\in X$, there hold
\begin{equation*}
 \begin{aligned}
\int_{\Omega}u\cdot v\text{d}x + \int_{0}^{t}\int_{\Omega}(u\cdot\nabla)u\cdot v&+ \nu\nabla u\cdot\nabla v- \frac{\rho_e}{\rho_0}(u\times \text{rot}A)\cdot v \text{d}x\text{d}t\\
&=\int_{0}^{t}\int_{\Omega}f\cdot v\text{d}x\text{d}t
+\int_{\Omega}\phi\cdot v\text{d}x
  \end{aligned}
\end{equation*}
and
  \begin{equation*}
\int_{\Omega}\frac{\partial A}{\partial t}\cdot v\text{d}x + \int_{0}^{t}\int_{\Omega} \frac{1}{\epsilon_0\mu_0}\nabla u\cdot\nabla v
+\frac{\rho_e}{\epsilon_0}u\cdot v \text{d}x\text{d}t=\int_{\Omega}\eta v\text{d}x.
\end{equation*}
\end{enumerate}
\end{definition}

Now, we define strong solution of the problem ({\ref{Eq}})$-$(\ref{BO}).

\begin{definition}
Suppose that $\phi, \psi\in H^2(\Omega,\mathbb{R}^2)\cap Z$, $\eta\in Z$, $\Phi\in L^2(0,T;$ $ H^1_0(\Omega))$. $(u, A)$ is called a global strong solution to ({\ref{Eq}})$-$(\ref{BO}), if $(u, A)$  satisfy
 \begin{equation*}
\begin{aligned}
& u\in L^\infty_{\text{loc}}(0,\infty; H^2(\Omega,\mathbb{R}^2)\cap Z),\ u_t\in L^\infty_{\text{loc}}(0,\infty;Y)\cap L^2_{\text{loc}}(0,\infty;Z)
\\ & p\in L^\infty_{\text{loc}}(0,\infty; H^1(\Omega)),
\\ & A \in L^\infty_{\text{loc}}(0,\infty; H^2(\Omega,\mathbb{R}^2)\cap Z),\ A_t\in L^\infty_{\text{loc}}(0,\infty;Z),\ A_{tt}\in L^\infty_{\text{loc}}(0,\infty;Y).
\end{aligned}
\end{equation*}
 Furthermore, both (\ref{Eq}) and (\ref{BO})  hold almost everywhere in $\Omega\times(0,T)$.
\end{definition}

\subsection{Lemmas}
Some more lemmas will  be frequently used later. One is the following embedding result in \cite{M}.
\begin{lemma}
For any $k\geq0$, the following hold
\begin{equation}\label{in0}
L^p((0,T),W^{k+1,p}(\Omega))\cap L^\infty(0,T;L^r(\Omega))\subset L^q(0,T;W^{k,q}(\Omega)),
\end{equation}
where $q=(r(k+1)p+np)/(rk+n)$. In the special case of $k=0$, (\ref{in0}) equals to
\begin{equation}\label{in00}
L^p(0,T;W^{1,p}(\Omega))\cap L^\infty(0,T;L^r(\Omega))\subset L^q((\Omega)\times(0,T)),
\end{equation}
provided that $q=(n+r)p/n$.
\end{lemma}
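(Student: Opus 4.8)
The statement is a parabolic interpolation embedding, and the natural engine is the Gagliardo--Nirenberg inequality applied in the spatial variable at (almost) every fixed time, followed by a plain integration in time in which the $L^\infty(0,T;L^r)$ bound absorbs one of the two interpolation factors. I would first treat the special case $k=0$, i.e.\ (\ref{in00}), since it already exhibits the whole mechanism, and then recover (\ref{in0}) by applying the same computation to each spatial derivative.

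For $k=0$, fix $u \in L^p(0,T;W^{1,p}(\Omega)) \cap L^\infty(0,T;L^r(\Omega))$ and set $M = \|u\|_{L^\infty(0,T;L^r)}$. For a.e.\ $t$, Gagliardo--Nirenberg on the bounded domain $\Omega$ gives
$$\|u(t)\|_{L^q(\Omega)} \le C\,\|u(t)\|_{W^{1,p}(\Omega)}^{\theta}\,\|u(t)\|_{L^r(\Omega)}^{1-\theta}, \qquad \frac{1}{q} = \theta\left(\frac{1}{p}-\frac{1}{n}\right) + (1-\theta)\frac{1}{r}.$$
The time exponents work out precisely when $\theta q = p$, i.e.\ $\theta = p/q$; substituting this choice into the scaling relation and clearing denominators yields exactly $q = (n+r)p/n$, the value asserted in the lemma. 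Raising the inequality to the power $q$, bounding the last factor by $M^{(1-\theta)q}$, and integrating over $(0,T)$ then gives
$$\int_0^T \|u(t)\|_{L^q}^q\,dt \le C\,M^{(1-\theta)q}\int_0^T \|u(t)\|_{W^{1,p}}^{\theta q}\,dt = C\,M^{(1-\theta)q}\int_0^T \|u(t)\|_{W^{1,p}}^{p}\,dt < \infty,$$
which is the claim $u \in L^q(\Omega \times (0,T))$.

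The general case $k \ge 0$ is the same argument carried out derivative by derivative. For each multi-index with $0 \le j \le k$ I would apply the higher-order Gagliardo--Nirenberg inequality $\|D^j u(t)\|_{L^q} \le C\|u(t)\|_{W^{k+1,p}}^{\theta}\|u(t)\|_{L^r}^{1-\theta}$ with scaling relation $\frac{1}{q} - \frac{j}{n} = \theta(\frac{1}{p} - \frac{k+1}{n}) + (1-\theta)\frac{1}{r}$, again imposing $\theta q = p$. The decisive algebraic check is that the resulting $q$ is independent of $j$: substituting $\theta = p/q$ and simplifying gives $q(n + rk) = (r(k+1)+n)p$, that is $q = (r(k+1)p + np)/(rk+n)$, for every admissible $j$. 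Summing the resulting spatial bounds over $j \le k$ and integrating in time as above controls $\int_0^T \|u(t)\|_{W^{k,q}}^q\,dt$, giving (\ref{in0}).

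The step that needs care is the admissibility of the forced parameter $\theta = p/q$ in the Gagliardo--Nirenberg inequality: one must check that it lies in the permissible range (for (\ref{in0}), $j/(k+1) \le \theta \le 1$) and steers clear of the borderline exceptional configurations of the inequality on a bounded domain, where the full Sobolev norm rather than the seminorm must be used. Since $r \ge 0$ forces $q \ge p$ and hence $\theta = p/q \le 1$, the upper constraint is automatic; the lower constraint and the exceptional cases are where the hypotheses on $p,q,r,n$ (implicit in the admissibility of the stated $q$) enter. The remaining measurability and finiteness issues are routine once one notes that $t \mapsto \|u(t)\|_{W^{k+1,p}}$ and $t \mapsto \|u(t)\|_{L^r}$ are measurable, so that the time integration above is justified by Tonelli's theorem.
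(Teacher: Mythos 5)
Your mechanism is the same as the paper's: apply the Gagliardo--Nirenberg inequality in space at a.e.\ fixed time with the forced choice $\theta=p/q$ (so that $\theta q=p$), then integrate in time, absorbing the $\|u(t)\|_{L^r}$ factor into the $L^\infty(0,T;L^r)$ bound. Your $k=0$ computation is correct and is exactly the paper's argument.

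However, your general-$k$ step rests on a false algebraic claim. Substituting $\theta=p/q$ into your scaling relation
\[
\frac{1}{q}-\frac{j}{n}=\theta\left(\frac{1}{p}-\frac{k+1}{n}\right)+(1-\theta)\frac{1}{r}
\]
and clearing denominators gives $q\,(jr+n)=p\,\big(r(k+1)+n\big)$, not $q\,(kr+n)=p\,\big(r(k+1)+n\big)$: the exponent produced by the equality form of the inequality does depend on $j$, namely $q_j=p\big(r(k+1)+n\big)/(jr+n)$, and it agrees with the lemma's $q$ only for the top order $j=k$. So for $j<k$ the interpolation identity with your forced $\theta$ and the lemma's $q$ simply does not hold, and "the resulting $q$ is independent of $j$" is wrong. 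The gap is localized and fixable in either of two ways: (i) keep your derivative-by-derivative scheme, observe that $q_j>q$ for $j<k$, so each $D^j u$ lies in $L^{q_j}$ in space-time, and use that $\Omega$ is bounded and $T<\infty$ to embed $L^{q_j}(\Omega\times(0,T))\subset L^{q}(\Omega\times(0,T))$; or (ii) do what the paper does and invoke the bounded-domain version of Gagliardo--Nirenberg in which the left-hand side carries the full $W^{k,q}$ norm and the scaling condition is the inequality
\[
\theta\left(m-\frac{n}{p}\right)+(1-\theta)\left(j-\frac{n}{r}\right)\ \geq\ k-\frac{n}{q},
\]
so that the single choice $j=0$, $m=k+1$, $\theta=p/q$ controls all derivatives up to order $k$ at once, with equality holding precisely at $q=(r(k+1)p+np)/(rk+n)$. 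With either repair your proof coincides with the paper's.
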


\begin{proof}
From Gagliardo-Nirenberg interpolation  inequality, we have
\begin{equation}\label{in}
||u||_{W^{k,q}}\leq C||u||^\theta_{W^{m,p}}||u||^{1-\theta}_{W^{j,r}},\quad 0\leq \theta\leq1,
\end{equation}
provided that
\begin{equation*}
\theta\bigg(m-\frac{n}{p}\bigg)+(1-\theta)\bigg(j-\frac{n}{r}\bigg)\geq k-\frac{n}{q},
\end{equation*}
where $C$ is a constant independent of $u$.
\\
Inserting  $j=0,\ q\geq p$, $m=k+1$ and $\theta=\frac{p}{q}$ into (\ref{in}), it is easy to see that
\begin{equation*}
\bigg(\int_\Omega|D^ku|^q\text{d}x\bigg)^{\frac{1}{q}}\leq C\bigg(\int_\Omega|D^{k+1}u|^p\text{d}x\bigg)^{\frac{1}{q}}\bigg(
\int_{\Omega}|u|^r\text{d}x\bigg)^{\frac{1}{r}(1-p/q)},
\end{equation*}
where $q=\frac{(r(k+1)p+np)}{rk+n}$.\\ Then we get
\begin{equation*}
\int_{0}^{T}\int_\Omega|D^ku|^q\text{d}x\text{d}t\leq C\sup\limits_{0\leq t\leq T}||u||^{(q-p)r}_{L^r}\int_{0}^{T}\int_\Omega |D^{k+1}u|^p\text{d}x\text{d}t,
\end{equation*}
which implies (\ref{in0}) and (\ref{in00}).
\end{proof}

The other lemma is responsible for the estimates for $u,p,u_t$ and follows from the
$L^p$-$L^q$-estimates \cite{GH,S} for non-stationary Stokes equations. For its proof, refer to \cite{GH,S}.

Let us consider the following Stokes equations
 \begin{eqnarray}
\label{st} \left\{
   \begin{aligned}
 & \frac{\partial u}{\partial t}=\nu\Delta u-\nabla p+f(x,t),
 \\&  \nabla\cdot u=0,
 \\&  u|_{\partial \Omega}=0,
 \\&  u(0)=u_0,
\end{aligned}
\right.
\end{eqnarray}
where $\nu>0$ is a constant.

\begin{lemma}
Let $\Omega\subset\mathbb{R}^n(n=2,3)$ be a
domain with compact $C^3$-boundary, $1<r,p<\infty,\ 0<T<\infty$. Then for any
$f\in L^r(0,T;L^q(\Omega,\mathbb{R}^n))$ and $u_0\in W^{2,q}(\Omega,\mathbb{R}^n))$
there exists a unique solution $(u,p)$ of (\ref{st}) satisfying
\begin{equation*}
\begin{aligned}
& u\in L^r(0,T;W^{2,q}(\Omega,\mathbb{R}^n)),\ u_t\in L^r(0,T;L^{q}(\Omega,\mathbb{R}^n)),
\\ & p\in L^r(0,T;W^{1,q}(\Omega))
\end{aligned}
\end{equation*}
such that
\begin{equation*}
\begin{aligned}
 &||u||_{L^r(0,T;W^{2,q})}+||u_t||_{L^r(0,T;L^q)}+||p||_{L^r(0,T;W^{1,q})}
 \\&\leq
C(||f||_{L^r(0,T;L^q)}+||u_0||_{W^{2,q}}),
\end{aligned}
\end{equation*}
where $C>0$ is a constant.
\end{lemma}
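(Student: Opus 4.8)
The plan is to reduce the parabolic Stokes system $(\ref{st})$ to an abstract Cauchy problem on the solenoidal subspace and then invoke the theory of maximal $L^r$-regularity. First I would establish the Helmholtz--Leray decomposition $L^q(\Omega,\mathbb{R}^n)=L^q_\sigma(\Omega)\oplus G_q(\Omega)$, where $L^q_\sigma$ is the closure of $X$ in $L^q$ and $G_q=\{\nabla\pi:\pi\in W^{1,q}(\Omega)\}$; this is available for domains with $C^1$-boundary, a fortiori for the $C^3$-boundary assumed here, and yields a bounded projection $P_q\colon L^q\to L^q_\sigma$. Applying $P_q$ to the momentum equation $(\ref{st})_1$ annihilates the pressure, since $P_q\nabla p=0$, and turns the system into the abstract evolution equation $u_t+A_q u=P_q f$, $u(0)=u_0$, where $A_q=-P_q\Delta$ is the Stokes operator with domain $D(A_q)=W^{2,q}(\Omega)\cap W^{1,q}_0(\Omega)\cap L^q_\sigma(\Omega)$.

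The next step is to show that $-A_q$ generates a bounded analytic semigroup on $L^q_\sigma$, i.e.\ that $A_q$ is sectorial. This follows from parameter-dependent resolvent estimates for the Stokes resolvent problem $\lambda u-\nu\Delta u+\nabla p=g$, $\nabla\cdot u=0$, $u|_{\partial\Omega}=0$, which in turn are obtained from the Agmon--Douglis--Nirenberg estimates \cite{ADN,M} for the associated elliptic system, localized via a partition of unity to the whole-space and half-space model problems and then patched together, the lower-order commutator terms being absorbed as relatively bounded perturbations. Sectoriality alone, however, does not imply maximal regularity on a general Banach space, and this is exactly where the decisive difficulty lies: following Geissert, Hess, Hieber et al.\ \cite{GH}, I would upgrade sectoriality to the statement that the resolvent family $\{\lambda(\lambda+A_q)^{-1}:\lambda\in\Sigma_\theta\}$ is $\mathcal{R}$-bounded for some angle $\theta>\pi/2$. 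By Weis's operator-valued Mikhlin multiplier theorem, the $\mathcal{R}$-sectoriality of $A_q$ is equivalent to maximal $L^r$-regularity, which delivers the existence and uniqueness of $u$ together with the estimate $\|u_t\|_{L^r(0,T;L^q)}+\|A_q u\|_{L^r(0,T;L^q)}\le C(\|P_q f\|_{L^r(0,T;L^q)}+\|u_0\|_{W^{2,q}})$. I expect the $\mathcal{R}$-boundedness verification, with its localization to the model operators and the Fourier-symbol analysis behind the multiplier theorem, to be the heaviest and most technical part of the whole argument.

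Finally I would recover the pressure and close the full estimate. Since $A_q u=-P_q\Delta u$ agrees with $-\Delta u$ modulo a gradient, the elliptic $W^{2,q}$-regularity for the stationary Stokes system converts control of $\|A_q u\|_{L^q}$ into control of $\|u\|_{W^{2,q}}$. The pressure is read off from the complementary part of the decomposition, $\nabla p=(I-P_q)(\nu\Delta u+f)$; as $I-P_q$ is bounded on $L^q$ and maps into $G_q$, this gives $\|\nabla p\|_{L^q}\le C(\|\Delta u\|_{L^q}+\|f\|_{L^q})$, and after normalizing $p$ to have zero mean the Poincar\'e--Wirtinger inequality upgrades this to the $W^{1,q}$-bound; integrating the $r$-th power in time then yields the stated pressure estimate. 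Uniqueness of $(u,p)$ follows from the uniqueness of the semigroup solution for $u$ and the uniqueness of the Helmholtz splitting for $\nabla p$.
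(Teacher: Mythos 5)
Your proposal is correct in outline, but it is worth being clear that it is not the paper's argument: the paper does not prove this lemma at all, it simply quotes it, deferring to Solonnikov \cite{S}, whose original proof is potential-theoretic, and to Geissert, Hess, Hieber et al.\ \cite{GH}, whose ``short proof'' deliberately avoids the machinery you invoke by deducing the Stokes maximal $L^p$-$L^q$ estimates from the known maximal regularity of the Dirichlet heat semigroup together with separate estimates for the pressure. Your route --- Helmholtz projection $P_q$, reduction to the abstract Cauchy problem for the Stokes operator $A_q=-P_q\Delta$, verification of $\mathcal{R}$-sectoriality of angle $>\pi/2$, and Weis's operator-valued multiplier theorem on the UMD space $L^q_\sigma$ --- is a third, equally legitimate path (the one carried out in the literature by Fr\"ohlich and by Farwig--Sohr-type resolvent analysis), and your recovery of the pressure via $\nabla p=(I-P_q)(\nu\Delta u+f)$ plus stationary Stokes regularity is the standard way to close the estimate. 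What your approach buys is a clean conceptual equivalence (maximal regularity $\Leftrightarrow$ $\mathcal{R}$-sectoriality); what it costs is that the decisive step, the $\mathcal{R}$-boundedness of the resolvent family on a bounded domain, is itself a substantial localization-plus-model-problem argument, so the ``proof'' is no shorter than the cited ones --- which is exactly why \cite{GH} recycle the heat equation result instead. Two points you should make explicit to have a complete argument: first, maximal regularity theorems handle $u_0=0$, so you must split $u(t)=e^{-tA_q}u_0+\int_0^te^{-(t-s)A_q}P_qf(s)\,\mathrm{d}s$ and use $u_0\in D(A_q)$ to bound the first term, which tacitly requires the compatibility conditions $\operatorname{div}u_0=0$ and $u_0|_{\partial\Omega}=0$ (the lemma as stated in the paper omits these); second, the parameter-dependent resolvent estimates do not follow verbatim from the stationary Agmon--Douglis--Nirenberg estimates, since the uniformity in $\lambda$ on a sector has to be tracked separately.
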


Finally, we give some regularity results for the stationary Stokes system. For its proof, refer to \cite{AG,ADN,M}.

\begin{lemma}
Assume that $(v,p)\in W^{2,p}(\Omega,\mathbb{R}^n)\times W^{1,p}(\Omega)(1<p<\infty)$ is a weak solution of the stationary Stokes equations
\begin{eqnarray*}
 \left\{
   \begin{aligned}
  &-\nu\Delta v-\nabla p=F(x),     &\ \text{in}\quad \Omega,
 \\ &\nabla\cdot v=0,            & \ \text{in}\quad \Omega,
 \\ &v|_{\partial \Omega}=0,    &\ \text{on}\quad \partial\Omega,
\end{aligned}
\right.
\end{eqnarray*}
and $F\in W^{k,q}(\Omega,\mathbb{R}^n)(k\geq0,1<q<\infty)$. Then it holds that
\begin{equation*}
(v,p)\in W^{k+2,q}(\Omega,\mathbb{R}^n)\times W^{k+1,q}(\Omega)
\end{equation*}
and
\begin{equation*}
||v||_{W^{k+2,q}}+||p||_{W^{k+1,q}}\leq C(||F||_{W^{k,q}}+||(u,p)||_{L^q})
\end{equation*}
with some constant $C$ depending on $n$, $\Omega$ and $q$.
\end{lemma}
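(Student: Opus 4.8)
The plan is to treat the stationary Stokes system as a particular instance of the general Agmon--Douglis--Nirenberg (ADN) regularity theory for elliptic systems satisfying the complementing (Lopatinskii--Shapiro) boundary condition, and then to bootstrap in $k$. First I would reduce the statement to an a priori estimate: existence and uniqueness of a weak solution $(v,p)$, with $p$ normalized to have zero mean, follows from the Lax--Milgram theorem on the divergence-free space $Z$, so it suffices to show that a solution already lying in $W^{2,q}\times W^{1,q}$ inherits the higher regularity together with the quantitative bound. The argument then splits into a base case $k=0$ and an induction on $k$, with essentially all the analytic content residing in the base case.

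For the base case I would verify that the principal part of the Stokes operator $(v,p)\mapsto(-\nu\Delta v-\nabla p,\ \nabla\cdot v)$ is elliptic in the ADN sense and that the Dirichlet condition $v|_{\partial\Omega}=0$ satisfies the complementing condition; this is the classical algebraic computation that brings the Stokes system under the theory. With ellipticity in hand, the $L^q$ estimates follow from the usual localization scheme: cover $\overline{\Omega}$ by finitely many balls, choose a subordinate partition of unity, and on each piece either (i) establish an interior estimate by freezing the (constant) coefficients and representing the second derivatives of $v$ and the first derivatives of $p$ through the Stokes fundamental tensor, whose relevant derivatives are Calder\'on--Zygmund kernels bounded on $L^q$ for $1<q<\infty$, or (ii) near the boundary, flatten $\partial\Omega$ by a $C^3$ diffeomorphism, reduce to the half-space Stokes problem, and invoke the corresponding half-space $L^q$ multiplier estimates. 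Summing the local bounds and absorbing the commutator terms generated by the cutoffs and the coordinate change yields precisely the estimate with the lower-order norm $\|(v,p)\|_{L^q}$ on the right.

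The passage from $k$ to $k+1$ is then largely formal. Applying a tangential derivative $D$ (handled via tangential difference quotients near the boundary so that the homogeneous boundary condition is preserved) shows that $(Dv,Dp)$ solves a Stokes system with right-hand side $DF$ plus terms already controlled by the inductive hypothesis; the missing normal derivatives are recovered algebraically from the equation itself and from the divergence constraint. Thus the $k$-level estimate, applied to this differentiated system with datum in $W^{k,q}$, upgrades $F\in W^{k+1,q}$ to $(v,p)\in W^{k+3,q}\times W^{k+2,q}$, and the $C^3$ smoothness of $\partial\Omega$ is exactly what keeps the transformed coefficients regular enough at each step. The main obstacle is concentrated in the boundary estimate of the base case: unlike a scalar equation, the pressure $p$ is determined nonlocally from $v$ through incompressibility, so the half-space analysis must control $p$ and the normal derivatives of $v$ simultaneously, and it is the verification of the complementing condition for the coupled $(v,p)$ pair that guarantees solvability of the boundary symbol and hence $L^q$ boundedness of the associated Poisson-type operators. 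Once that condition is checked, the Calder\'on--Zygmund and Mikhlin multiplier machinery delivers the estimate routinely.
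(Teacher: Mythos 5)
The paper gives no proof of this lemma at all: it is quoted as a known result, with the reader referred to the cited works of Agmon--Douglis--Nirenberg, Amrouche--Girault, and Ma. Your sketch (ADN ellipticity plus the complementing condition, localization with interior Calder\'on--Zygmund estimates and boundary flattening, then induction on $k$ via tangential difference quotients) is exactly the classical machinery those references carry out, so your proposal is correct and follows essentially the same route the paper relies on.
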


\section{Main Results}

In this section, we state the global weak solution existence theorem and the global strong solution existence one for the problem (\ref{Eq})$-$(\ref{BO}), and also prove them.

\begin{theorem}
Let the initial value $\phi,\eta\in Y$, $\psi\in Z$. If $f\in Y, \Phi\in L^2(0,T; H^1_0(\Omega))$,
then there exists a global weak solution for the problem (\ref{Eq})$-$(\ref{BO}).
\end{theorem}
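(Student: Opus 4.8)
The plan is to construct the weak solution by the Faedo--Galerkin method, combined with uniform energy estimates and a compactness argument. First I would fix an orthonormal basis $\{w_k\}_{k\geq1}$ of $Y$ consisting of eigenfunctions of the Stokes operator; these are smooth and divergence-free, vanish on $\partial\Omega$, and form a basis of $Z$ as well. Seeking Galerkin approximations $u_m=\sum_{k=1}^m g_k(t)w_k$ and $A_m=\sum_{k=1}^m h_k(t)w_k$, I project the two evolution equations onto $\mathrm{span}\{w_1,\dots,w_m\}$. Since each $w_k$ is divergence-free, the pressure gradient $\nabla p$ and the term $\nabla\Phi$ drop out after integration by parts, so the projected problem becomes a system of ordinary differential equations in the coefficients $(g_k,h_k)$ whose nonlinearities $(u_m\cdot\nabla)u_m$ and $u_m\times\text{rot}\,A_m$ depend locally Lipschitz-continuously on the coefficients. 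The Cauchy--Lipschitz theorem then furnishes a unique local-in-time solution.

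The heart of the argument is the uniform energy estimate. Taking the inner product of the projected first equation with $u_m$, the convective term $\int_\Omega (u_m\cdot\nabla)u_m\cdot u_m\,dx$ vanishes by incompressibility and the coupling term vanishes pointwise, because $u_m\times\text{rot}\,A_m$ is orthogonal to $u_m$; estimating $\int_\Omega f\cdot u_m\,dx$ by the Poincaré and Young inequalities gives
\begin{equation*}
\frac{d}{dt}\|u_m\|^2+\nu\|\nabla u_m\|^2\leq C\|f\|^2 .
\end{equation*}
Taking the inner product of the projected second equation with $\partial_t A_m$ and integrating by parts (the $\nabla\Phi$ term again vanishing by divergence-freeness) yields
\begin{equation*}
\frac{1}{2}\frac{d}{dt}\Big(\|\partial_t A_m\|^2+\tfrac{1}{\epsilon_0\mu_0}\|\nabla A_m\|^2\Big)=\frac{\rho_e}{\epsilon_0}\int_\Omega u_m\cdot\partial_t A_m\,dx ,
\end{equation*}
whose right-hand side is bounded by $C(\|u_m\|^2+\|\partial_t A_m\|^2)$. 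Adding the two estimates and applying Gronwall's inequality produces bounds, uniform in $m$ and in $t\in[0,T]$, for $u_m$ in $L^\infty(0,T;Y)\cap L^2(0,T;Z)$, for $\partial_t A_m$ in $L^\infty(0,T;Y)$, and for $A_m$ in $L^\infty(0,T;Z)$. These a priori bounds also exclude finite-time blow-up of the ODE coefficients, so the approximations exist on all of $[0,T]$.

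It then remains to pass to the limit $m\to\infty$. The uniform bounds give, along a subsequence, the weak-$*$ convergences $u_m\rightharpoonup u$ in $L^\infty(0,T;Y)$ and in $L^2(0,T;Z)$, $A_m\rightharpoonup A$ in $L^\infty(0,T;Z)$, and $\partial_t A_m\rightharpoonup \partial_t A$ in $L^\infty(0,T;Y)$. To treat the nonlinearities I would extract from the projected first equation a uniform bound on $\partial_t u_m$ in $L^2(0,T;Z')$ (using the two-dimensional interpolation $\|u_m\|_{L^4}^2\leq C\|u_m\|\,\|\nabla u_m\|$ to control the convective and coupling terms as functionals on $Z$), and then invoke the Aubin--Lions lemma with the compact embedding $Z\hookrightarrow\hookrightarrow Y$ to obtain strong convergence $u_m\to u$ in $L^2(0,T;Y)$. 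This strong convergence lets me pass to the limit in the convective term, rewritten as $-\int u_m\otimes u_m:\nabla v$ with $u_m\otimes u_m\to u\otimes u$ in $L^1$, and, crucially, in the coupling term: since $u_m v\to uv$ strongly in $L^2$ while $\text{rot}\,A_m\rightharpoonup \text{rot}\,A$ only weakly in $L^2$, the product $\int (u_m\times\text{rot}\,A_m)\cdot v$ still converges. Verifying the initial data in the weak sense then completes the proof.

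The step I expect to be the main obstacle is precisely the compactness required for these two nonlinear terms. Because the magnetic potential $A$ solves a wave-type equation, it enjoys no parabolic smoothing, so no compactness is available for $A_m$ and one cannot argue by strong convergence of both factors in $u_m\times\text{rot}\,A_m$. The argument therefore hinges on establishing strong $L^2$ convergence of $u_m$ alone, via the $\partial_t u_m$ bound and Aubin--Lions, and then exploiting the strong-times-weak structure of the coupling integral; identifying this structure and checking that the time-derivative estimate is strong enough to feed the Aubin--Lions lemma is the delicate point.
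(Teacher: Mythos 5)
Your proposal is correct and follows essentially the same route as the paper: the paper's entire proof of this theorem is a one-line appeal to ``the standard Galerkin method and the similar estimates in \cite{LY}'', which is precisely the Faedo--Galerkin construction, uniform energy estimates, and compactness passage that you spell out. The details you supply (Stokes eigenfunction basis, the decoupled energy estimate for $u_m$ and the wave-type estimate for $A_m$, the $\partial_t u_m$ bound in $L^2(0,T;Z')$ feeding Aubin--Lions, and the strong-times-weak limit in $u_m\times\mathrm{rot}\,A_m$) are exactly the ingredients the paper leaves to the cited reference.
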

\begin{proof}
By the standard Galerkin method and the similar estimates in \cite{LY},  the existence of global weak solution of (\ref{Eq})$-$(\ref{BO}) is also valid, we omit it.
\end{proof}
\begin{theorem}
Let $\Omega$ be a bounded
domain with compact $C^3$-boundary. If  $\phi, \psi\in H^2(\Omega,\mathbb{R}^2)\cap Z$, $\eta\in Z$, for any $f\in Y, \Phi\in L^2(0,T; H^1_0(\Omega))$,
then there exists a global strong solution for the problem (\ref{Eq})$-$(\ref{BO}),  i.e.,  for any $\ 0<T<\infty$
\begin{equation*}
\begin{aligned}
& u\in L^\infty(0,T; H^2(\Omega,\mathbb{R}^2)\cap Z),\ u_t\in L^\infty(0,T;Y)\cap L^2(0,T;Z)
\\ & p\in L^\infty(0,T; H^1(\Omega)),
\\ & A \in L^\infty(0,T; H^2(\Omega,\mathbb{R}^2)\cap Z),\ A_t\in L^\infty(0,T;Z),\ A_{tt}\in L^\infty(0,T;Y).
\end{aligned}
\end{equation*}

\end{theorem}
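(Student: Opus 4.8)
The plan is to take the global weak solution furnished by Theorem 3.1 as a starting point and to raise its regularity by a sequence of a priori estimates, treating the hyperbolic block for $A$ and the parabolic (Stokes) block for $u$ separately and closing the scheme with the maximal $L^p$-$L^q$ regularity of Lemma 2.2 and the stationary estimate of Lemma 2.3. Two structural features of the system will be exploited throughout. First, $(u\times\mathrm{rot}A)\cdot u=0$ pointwise, so the Lorentz-type force disappears from every estimate obtained by testing the $u$-equation against $u$ or $u_t$; in particular $(u_t\times\mathrm{rot}A)\cdot u_t=0$ as well. Second, $\nabla\Phi$ is annihilated whenever it is tested against a divergence-free field with zero normal trace: since $\Phi\in H^1_0(\Omega)$ and $\mathrm{div}\,A=0$, the $\nabla\Phi$ term contributes nothing to the energy identities for $A$.

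First I would improve the magnetic potential by testing the wave equation against $-\Delta A_t$. After integration by parts this gives
\[
\frac{1}{2}\frac{d}{dt}\Big(\|\nabla A_t\|^2+\frac{1}{\epsilon_0\mu_0}\|\Delta A\|^2\Big)=\frac{\rho_e}{\epsilon_0}\int_\Omega\nabla u:\nabla A_t\,dx ,
\]
the $\nabla\Phi$-contribution vanishing because $-\Delta A_t$ is divergence free (its divergence is $\Delta(\mathrm{div}\,A_t)=0$) and $\Phi$ has zero trace. Bounding the right-hand side by $C\|\nabla u\|\,\|\nabla A_t\|$ and integrating, the weak bound $u\in L^2(0,T;Z)$ together with the admissible data $\eta\in Z$, $\psi\in H^2\cap Z$ yields $A\in L^\infty(0,T;H^2\cap Z)$ and $A_t\in L^\infty(0,T;Z)$; reading the equation and using the Leray projection to identify $\nabla\Phi$ as the gradient part of $\frac{1}{\epsilon_0\mu_0}\Delta A+\frac{\rho_e}{\epsilon_0}u$ then gives $A_{tt}\in L^\infty(0,T;Y)$. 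The payoff is that $\mathrm{rot}A\in L^\infty(0,T;H^1)\hookrightarrow L^\infty(0,T;L^p)$ for every finite $p$, so the magnetic forcing in the $u$-equation is henceforth harmless.

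Next I would treat the momentum equation as a Stokes system with forcing $F=-(u\cdot\nabla)u+\frac{\rho_e}{\rho_0}u\times\mathrm{rot}A+f$. A standard two-dimensional energy estimate (testing with the Stokes operator and using $\|v\|_{L^4}\le C\|v\|^{1/2}\|\nabla v\|^{1/2}$ and Gronwall) first secures $u\in L^\infty(0,T;H^1)\cap L^2(0,T;H^2)$, whence $F\in L^2(\Omega\times(0,T))$ and Lemma 2.2 (with $q=2$, $u_0=\phi\in H^2$) delivers $u_t\in L^2(0,T;L^2)$ and $p\in L^2(0,T;H^1)$; Lemma 2.1 supplies the embeddings needed to place $F$ in the required class. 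In parallel, differentiating the momentum equation in $t$ (here $f$ is time independent) and testing with $u_t$ gives
\[
\frac{1}{2}\frac{d}{dt}\|u_t\|^2+\nu\|\nabla u_t\|^2=-\int_\Omega (u_t\cdot\nabla)u\cdot u_t\,dx+\frac{\rho_e}{\rho_0}\int_\Omega (u\times\mathrm{rot}A_t)\cdot u_t\,dx ,
\]
in which $\int(u\cdot\nabla)u_t\cdot u_t$ and $\int(u_t\times\mathrm{rot}A_t)\cdot u_t$ vanish; estimating the surviving terms with the two-dimensional Ladyzhenskaya inequality and $A_t\in L^\infty(0,T;Z)$, and using $u_t(0)\in Y$ read off from the equation at $t=0$ (via $\phi,\psi\in H^2$), Gronwall's inequality yields $u_t\in L^\infty(0,T;Y)\cap L^2(0,T;Z)$. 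Finally, rewriting the equation as the stationary Stokes system $-\nu\Delta u+\nabla(p/\rho_0)=F-u_t$ and invoking Lemma 2.3 with $k=0$, $q=2$ gives $u\in L^\infty(0,T;H^2\cap Z)$ and $p\in L^\infty(0,T;H^1)$.

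The hard part is exactly the quantity flagged in the introduction, $\|(u\cdot\nabla)u\|_{L^\infty(0,T;L^2)}$: this is the borderline two-dimensional case, since a uniform $H^1$ bound does not embed into $L^\infty$, so $\|(u\cdot\nabla)u\|_{L^2}\le\|u\|_{L^\infty}\|\nabla u\|_{L^2}$ cannot be closed by inspection. The device I would use is to make the convection term \emph{sublinear} in the top norm: by Agmon's inequality $\|u\|_{L^\infty}\le C\|u\|_{L^2}^{1/2}\|u\|_{H^2}^{1/2}$, the already-secured bound $u\in L^\infty(0,T;H^1)$ gives $\|(u\cdot\nabla)u\|_{L^2}\le C\|u\|_{H^2}^{1/2}$, so the stationary estimate of Lemma 2.3 reads $\|u(t)\|_{H^2}\le C\|u(t)\|_{H^2}^{1/2}+C'$ (the remaining forcing terms $u_t$, $u\times\mathrm{rot}A$, $f$ being already bounded in $L^\infty(0,T;L^2)$, and the lower-order pressure term controlled by normalizing $\int_\Omega p=0$), which self-improves to a uniform bound on $\|u(t)\|_{H^2}$. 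This is why energy estimates alone do not suffice and Lemma 2.2 and Lemma 2.3 are indispensable, and why the steps must be ordered — first $A$, then $u_t$, then the stationary upgrade — so that no estimate presupposes the $H^2$ regularity it is meant to produce.
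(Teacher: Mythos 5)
Your proposal is correct and follows the paper's overall architecture (weak solution from Theorem 3.1, then the multiplier $-\Delta A_t$ for the wave equation to get $A\in L^\infty(0,T;H^2\cap Z)$, $A_t\in L^\infty(0,T;Z)$, $A_{tt}\in L^\infty(0,T;Y)$, then Stokes maximal regularity plus the stationary ADN estimate for $u$), but your two middle steps take a genuinely different route. For the bound $\|\nabla u\|_{L^\infty(0,T;L^2)}$, the paper first applies Solonnikov's theorem at the low exponent $4/3$ (using $u\in L^4(\Omega\times(0,T))$ from the interpolation lemma, so that $(u\cdot\nabla)u$ and $u\times\mathrm{rot}A$ lie in $L^{4/3}$), obtains $u\in L^{4/3}(0,T;W^{2,4/3})\hookrightarrow L^{4/3}(0,T;C^0)$, and then tests the momentum equation with $u_t$, controlling the nonlinear terms by $\|u\|_{C^0}$; you instead run the classical 2D argument, testing with the Stokes operator and using Ladyzhenskaya plus Gronwall to get $u\in L^\infty(0,T;H^1)\cap L^2(0,T;H^2)$ directly, invoking maximal regularity only at exponent $q=2$. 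Your version buys manifestly time-integrable Gronwall coefficients ($\|u\|^2\|\nabla u\|^2$, $\|\nabla u\|^2$), whereas the paper's coefficients involve $\|u\|_{C^0}^2$, which from its Step 2 is a priori only in $L^{2/3}(0,T)$ --- so your route is arguably cleaner at exactly this point. At the borderline step $\|(u\cdot\nabla)u\|_{L^\infty(0,T;L^2)}$, the paper bootstraps twice through the stationary Stokes lemma ($\widetilde F\in L^r$ with $r<2$ gives $u\in W^{2,r}\hookrightarrow C^0$, which upgrades $\widetilde F$ to $L^2$ and yields $H^2$), while you close in one application via Agmon's inequality and absorption ($\|u\|_{H^2}\le C\|u\|_{H^2}^{1/2}+C'$); both are sound, the paper's avoiding any absorption argument, yours being shorter. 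Your time-differentiated $u_t$-estimate coincides with the paper's Step 3(ii), including both cancellations.

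One slip to correct in the write-up: the blanket claim that the Lorentz force ``disappears from every estimate obtained by testing the $u$-equation against $u$ or $u_t$'' is false for $u_t$, since $(u\times\mathrm{rot}A)\cdot u_t\neq0$ in general (the paper must estimate precisely this term in its Step 3(i)); what is true, and what your displayed identity actually uses, is that $(u_t\times\mathrm{rot}A)\cdot u_t=0$ after differentiating in time --- note also that the vanishing term is $(u_t\times\mathrm{rot}A)\cdot u_t$, not $(u_t\times\mathrm{rot}A_t)\cdot u_t$ as you wrote. Since your scheme never tests the undifferentiated equation with $u_t$, nothing breaks, but the statement as phrased is wrong.
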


 \begin{proof}
 The proof can be divided into 3 steps. We will use the same generic constant $C$ to
 denote various constants that depend on $\mu_0,\rho_0,\rho_e,\epsilon_0$ and $T$ only.
\\
{\textbf{Step 1  The estimates and regularity for}} $A$.

From Theorem 3.1, for any $0<T<\infty$,  we get the global weak solution
\begin{equation}\label{P0}
 \begin{aligned}
 & u\in L^2(0,T; Z)\cap L^\infty(0,T; Y),
 \\ & A\in L^\infty(0,T; Z),\  A_t\in L^\infty(0,T; Y).
  \end{aligned}
\end{equation}
Multiplying both sides of (\ref{Eq})$_2$ by  $-\Delta A_t$ and
integrating over $\Omega$, we have
\begin{equation}\label{P1}
\frac{1}{2}\frac{\text{d}}{\text{d}t}\bigg(\int_\Omega|\nabla A_t|^2+\frac{1}{\epsilon_0\mu_0}|\Delta A|^2 \text{d}x\bigg)
=\frac{\rho_e}{\epsilon_0}\int_\Omega \nabla u\nabla A_t \text{d}x
\end{equation}
since $\text{div}A=0$ and (\ref{BO}).
\\ Using the H\"{o}lder inequality, it is easy to see that
\begin{equation}\label{P2}
\begin{aligned}
\frac{\text{d}}{\text{d}t}\bigg(||\nabla A_t||^2_{L^2}+\frac{1}{\epsilon_0\mu_0}||\Delta A||^2_{L^2}\bigg)\leq 2\bigg(||\nabla A_t||^2_{L^2}+\frac{1}{\epsilon_0\mu_0}||\Delta A||^2_{L^2}+\frac{\rho^2_e}{\epsilon^2_0}||\nabla u||^2_{L^2}\bigg).
\end{aligned}
\end{equation}
 Then, by the Gronwall
inequality,  (\ref{P2}) implies
\begin{equation}\label{P3}
\begin{aligned}
||\nabla A_t||^2_{L^2}+||\Delta A||^2_{L^2}\leq e^{2T}\bigg(||\Delta \psi||_{L^2}+||\nabla \eta||_{L^2}+2\frac{\rho^2_0}{\epsilon^2_0}\int_{0}^{T}||\nabla u||^2_{L^2}\text{d}s\bigg),
\end{aligned}
\end{equation}
for $\forall~0<T<\infty$.
\\
Therefore, we conclude that
\begin{equation}\label{P4}
\nabla A_t\in L^\infty(0,T;Y),\ \Delta A\in L^\infty(0,T;Y).
\end{equation}

Next, we need to derive an estimate on $||A_{tt}||_{L^\infty(0,T;Y)}$.

Multiplying both sides of Eqs. (\ref{Eq})$_2$ by  $A_{tt}$  integrating over $\Omega$ lead to \begin{equation}\label{P51}
\int_{\Omega} |A_{tt}|^2\text{d}x=\frac{1}{\epsilon_0\mu_0}\int_{\Omega}\Delta A A_{tt}\text{d}x+\frac{\rho_0}{\epsilon_0}\int_{\Omega}
u A_{tt}\text{d}x
\end{equation}
since $-\int_{\Omega}\nabla\Phi A_{tt}\text{d}x=\int_{\Omega}\Phi \text{div}A_{tt}\text{d}x=0$.\\
Using the H\"{o}der inequality and Young inequality, we deduce from (\ref{P51}) that
 \begin{equation}\label{P52}
\int_{\Omega} |A_{tt}|^2\text{d}x\leq\frac{1}{\epsilon^2_0\mu^2_0}\int_{\Omega}|\Delta A|^2\text{d}x + \frac{\rho^2_0}{\epsilon^2_0}\int_{\Omega}|u|^2\text{d}x
+\frac{1}{2}\int_{\Omega}|A_{tt}|^2\text{d}x .
\end{equation}
It is easy to see that
 \begin{equation}\label{P5}
\text{ess}\sup\limits_{0\leq t\leq T}\int_{\Omega} |A_{tt}|^2\text{d}x
\leq \sup\limits_{0\leq t\leq T}\frac{2}{\epsilon^2_0\mu^2_0}\int_{\Omega} |\Delta A|^2\text{d}x+
\sup\limits_{0\leq t\leq T}\frac{2\rho^2_0}{\epsilon^2_0}\int_{\Omega} |u|^2\text{d}x.
\end{equation}
Putting  the estimates (\ref{P0}), (\ref{P4}) and (\ref{P5}) together, we have
\begin{equation}\label{P6}
A_{tt}\in L^\infty(0,T;Y).
\end{equation}
Hence, (\ref{P4}) and (\ref{P6}) imply the regularity for $A$.
\\
{\textbf{Step 2 The} $L^{\frac{4}{3}}$-$L^{\frac{4}{3}}$-\textbf{estimates for}  $u\cdot\nabla u$ and $u\times A$}.

From (\ref{P0}) and Lemma 2.3(the case that k=0), it is easy to check that
\begin{equation}\label{P00}
u\in L^4((0,T)\times\Omega).
\end{equation}
Note that
\begin{equation}\label{P01}
 \int_{0}^{T}\int_{\Omega}|Du|^{\frac{4}{3}}|u|^{\frac{4}{3}}\text{d}x\text{d}t
 \leq \bigg(\int_{0}^{T}\int_{\Omega}|Du|^{2}\text{d}x\text{d}t\bigg)^{\frac{2}{3}}
 \bigg(\int_{0}^{T}\int_{\Omega}|u|^{4}\text{d}x\text{d}t\bigg)^{\frac{1}{3}},
\end{equation}
which implies that
\begin{equation}\label{P02}
 u\cdot\nabla u\in L^{\frac{4}{3}}(0,T;L^{\frac{4}{3}}(\Omega,\mathbb{R}^2)).
\end{equation}
 Combining (\ref{P0}) and (\ref{P00}),
 we get
 \begin{equation}\label{P03}
 \begin{aligned}
\int_{0}^{T}\int_{\Omega}|u\times\text{rot}A|^{\frac{4}{3}}\text{d}x\text{d}t
  &\leq \bigg(\int_{0}^{T}\int_{\Omega}|u|^{4}\text{d}x\text{d}t\bigg)^{\frac{1}{3}}
 \bigg(\int_{0}^{T}\int_{\Omega}|\text{rot}A|^{2}\text{d}x\text{d}t\bigg)^{\frac{2}{3}}
\\ & \leq C\bigg(\int_{0}^{T}\int_{\Omega}|u|^{4}\text{d}x\text{d}t\bigg)^{\frac{1}{3}}
 \bigg(\int_{0}^{T}\int_{\Omega}|\nabla A|^{2}\text{d}x\text{d}t\bigg)^{\frac{2}{3}}
 \\ & <\infty,
\end{aligned}
\end{equation}
which in turn implies
\begin{equation}\label{P003}
u\times \text{rot}A\in L^{\frac{4}{3}}(0,T;L^{\frac{4}{3}}(\Omega,\mathbb{R}^2)).
\end{equation}

Recall that $(u,p)$ satisfying  the following Stokes system
\begin{eqnarray}\label{P04}
 \left\{
   \begin{aligned}
 & \frac{\partial u}{\partial t}=\nu\Delta u-\frac{1}{\rho_0}\nabla p+F(x,t),
 \\&  \nabla\cdot u=0,
 \\&  u|_{\partial \Omega}=0,
 \\&  u(0)=\phi,
\end{aligned}
\right.
\end{eqnarray}
where $F(x,t)=f-(u\cdot\nabla)u+\frac{\rho_e}{\rho_0}(u\times\text{rot}A)$.

By (\ref{P02}) and (\ref{P003}), we get $F\in L^{\frac{4}{3}}(0,T;L^{\frac{4}{3}}(\Omega,\mathbb{R}^2))$. Applying this into Lemma 2.4,
 we obtain that
\begin{equation}\label{P05}
\begin{aligned}
 & u\in L^{\frac{4}{3}}(0,T;W^{2,\frac{4}{3}}(\Omega,\mathbb{R}^2)),\ u_t\in L^{\frac{4}{3}}(0,T;L^{\frac{4}{3}}(\Omega,\mathbb{R}^2)),
 \\ & p\in L^{\frac{4}{3}}(0,T;W^{1,\frac{4}{3}}(\Omega)).
 \end{aligned}
\end{equation}

In the next step, the Lemma 2.5 will be used,  since (\ref{P04}) can be rewritten  as the following stationary Stokes equations
\begin{eqnarray}\label{P06}
 \left\{
   \begin{aligned}
 & -\nu\Delta u+\frac{1}{\rho_0}\nabla p =\widetilde{F}(x,t),
 \\&  \nabla\cdot u=0,
 \\&  u|_{\partial \Omega}=0,
 \\&  u(0)=\phi,
\end{aligned}
\right.
\end{eqnarray}
where $\widetilde{F}(x,t)=f-(u\cdot\nabla)u+\frac{\rho_e}{\rho_0}(u\times\text{rot}A)-u_t$.
\\
\textbf{Step 3 The estimate for}  $||\widetilde{F}||_ {L^\infty(\Omega,L^2(\Omega,\mathbb{R}^2))}$.

\vskip2mm
{\textbf{(i)  The estimate for}} $||\nabla u||_{L^\infty(0,T;L^2)}$.

Multiplying Eq. (\ref{Eq})$_1$ by $u_t$ and integrating over $\Omega$, we have
\begin{equation}\label{cha01}
  \frac{\mu}{2}\frac{\text{d}}{\text{d}t}\int_\Omega|\nabla u|^2\text{d}x
 +\int_\Omega|u_t|^2\text{d}x
 =\int_{\Omega}-(u\cdot \nabla)u\cdot u_t+\frac{\rho_e}{\rho_0}(u\times\text{rot}A)u_t\text{d}x.
\end{equation}
Note that  the  following continuous embeddings
\begin{equation}\label{P08}
W^{2,\frac{4}{3}}(\Omega,\mathbb{R}^2)\hookrightarrow W^{1,4}(\Omega,\mathbb{R}^2)\hookrightarrow C^{\frac{1}{2}}(\Omega,\mathbb{R}^2) \hookrightarrow C^{0}(\Omega,\mathbb{R}^2).
\end{equation}
Combining (\ref{P08}), H\"{o}lder inequality and $\epsilon-$Young inequality, we derive that
\begin{equation}\label{cha02}
 \begin{aligned}
 \int_{\Omega}|(u\cdot \nabla)u\cdot u_t|\text{d}x
 &\leq C||u_t||_{L^2}||u||_{C^0}||\nabla u||_{L^2}
\\ &\leq \frac{1}{4}||u_t||^2_{L^2}+C^2||u||^2_{C^0}||\nabla u||^2_{L^2}
 \end{aligned}
\end{equation}
and
\begin{equation}\label{cha03}
 \begin{aligned}
 \frac{\rho_e}{\rho_0}\int_{\Omega}|(u\times\text{rot}A)u_t|\text{d}x
 &\leq C||u||_{C^0}||\nabla A||_{L^2}||u_t||_{L^2}
\\ &\leq \frac{1}{4}||u_t||^2_{L^2}+C^2||u||^2_{C^0}||\nabla A||^2_{L^2},
 \end{aligned}
\end{equation}
which together with Gronwall's inequality implies
\begin{equation}\label{cha04}
   \begin{aligned}
  &\text{ess}\sup\limits_{0<t<T}||\nabla u||_{L^2}<\infty.
  \end{aligned}
\end{equation}
\vskip2mm
{\textbf{(ii)   The estimate for}} $|| u_t||_{L^\infty(0,T;L^2)}$.

Taking $t$-derivative of  Eq. (\ref{Eq})$_1$, then one gets that
\begin{equation}\label{cha1}
u_{tt}-\mu\Delta u_t=-(u_t\cdot \nabla)u-(u\cdot \nabla)u_t-\frac{1}{\rho_0}\nabla p_t+\frac{\rho_e}{\rho_0}u_t\times\text{rot}A
+\frac{\rho_e}{\rho_0}u\times\text{rot}A_t.
\end{equation}
Multiplying (\ref{cha1}) by $u_t$ and integrating over $\Omega$, we obtain
\begin{equation}\label{cha2}
  \frac{1}{2}\frac{\text{d}}{\text{d}t}\int_\Omega|u_t|^2\text{d}x
 +\mu\int_\Omega|\nabla u_t|^2\text{d}x
 =\int_{\Omega}-(u_t\cdot \nabla)u\cdot u_t+\frac{\rho_e}{\rho_0}(u\times\text{rot}A_t)u_t\text{d}x.
\end{equation}
since
\begin{equation*}
(u_t\times\text{rot}A)\cdot u_t=0, \int_\Omega(u\cdot \nabla)u_t\cdot u_t\text{d}x=-\int_\Omega\frac{1}{2}u^2_t\text{div}{u}\text{d}x=0.
\end{equation*}

Next, we estimate the two terms on the right hand of (\ref{cha2}). By (\ref{P08}) and integrating by parts yield
 \begin{equation}\label{cha3}
 \begin{aligned}
 &-\int_{\Omega}(u_t\cdot \nabla)u\cdot u_t\text{d}x=\int_{\Omega}u^i_tu^j\partial_iu^j_t
 -u^i_t\partial_i(u^ju^j_t)\text{d}x
\\ &=\int_{\Omega}u^i_tu^j\partial_iu^j_t\text{d}x\leq C||u||_{C^0}\bigg(||u_t||^2_{L^2}+||\nabla u_t||^2_{L^2}\bigg).
 \end{aligned}
\end{equation}
And similarly,
\begin{equation}\label{cha4}
 \begin{aligned}
 \frac{\rho_e}{\rho_0}\int_{\Omega}|(u\times\text{rot}A_t)u_t|\text{d}x&\leq \frac{C\rho_e}{\rho_0}\int_{\Omega}|uu_t\nabla A_t|\text{d}x
\\ &\leq \frac{C\rho_e}{\rho_0}||u||_{C^0}\bigg(||u_t||^2_{L^2}+||\nabla A_t||^2_{L^2}\bigg).
 \end{aligned}
\end{equation}
Hence, by (\ref{cha2}), (\ref{cha3}) and (\ref{cha4}), we get that
\begin{equation}\label{cha5}
   \begin{aligned}
  &\frac{1}{2}\frac{\text{d}}{\text{d}t}\int_\Omega|u_t|^2\text{d}x
 +\mu\int_\Omega|\nabla u_t|^2\text{d}x
 \\ &\leq||u||_{C^0}\bigg((1+C\rho_e/\rho_0)||u_t||^2_{L^2}+||\nabla u_t||^2_{L^2}\bigg)+\frac{C\rho_e}{\rho_0}||u||_{C^0}||\nabla A_t||^2_{L^2},
  \end{aligned}
\end{equation}
which together with Gronwall's inequality completes the estimate
\begin{equation}\label{cha6}
   \begin{aligned}
  &\text{ess}\sup\limits_{0<t<T}||u_t(t)||_{L^2}<\infty.
  \end{aligned}
\end{equation}
\vskip 2mm
{\textbf{(iii)  The estimates for}} $||(u\cdot\nabla u)||_{L^\infty(0,T;L^2)}$ and $|| u\times A||_{L^\infty(0,T;L^2)}$.

From (\ref{cha04}), it is easy to see that
\begin{equation}\label{cha61}
\nabla u\in L^\infty(0,T;Y).
\end{equation}
Hence
\begin{equation*}
 u\in L^\infty(0,T;H^1).
\end{equation*}

It is known that $H^1\hookrightarrow L^q(1<q<\infty)$ when $n=2$. Note that
\begin{equation}\label{cha62}
 \begin{aligned}
 \bigg(\int_{\Omega}|(u\cdot \nabla )u|^r\text{d}x\bigg)^\frac{1}{r}
 &\leq \bigg(\int_{\Omega}|\nabla u|^2\text{d}x\bigg)^\frac{1}{2}\bigg(\int_{\Omega}|u|^{\frac{2r}{2-r}}
 \text{d}x\bigg)^\frac{2-r}{2r}<\infty
 \end{aligned}
\end{equation}
provided that  $1<r<2$. Hence
\begin{equation}\label{cha611}
(u\cdot \nabla )u \in L^\infty(0,T;L^r(\Omega,\mathbb{R}^2)).
\end{equation}

By using the H\"{o}lder inequality and the Sobolev embedding theorem,  it follows that
\begin{equation}\label{cha63}
 \begin{aligned}
 \int_{\Omega}|u\times\text{rot}A|^2\text{d}x
 &\leq C\int_{\Omega}|u|^2|\nabla A|^2\text{d}x
 \\ & \leq C\bigg(\int_{\Omega}|u|^4\text{d}x+\int_{\Omega}|\nabla A|^4\text{d}x\bigg)
  \\ & \leq C\bigg(\int_{\Omega}|\nabla u|^2\text{d}x+\int_{\Omega}|\Delta A|^2\text{d}x\bigg).
  \end{aligned}
\end{equation}
Together  (\ref{P4}) with  (\ref{cha63}), we have
\begin{equation}\label{cha612}
u\times\text{rot}A \in L^\infty(0,T;L^2(\Omega,\mathbb{R}^2)).
\end{equation}
According to (\ref{cha6}), (\ref{cha611}), (\ref{cha612}) and the assumption, $\widetilde{F}$ in (\ref{P06}) satisfies
 \begin{equation}\label{cha64}
\widetilde{F}\in L^r(\Omega,\mathbb{R}^2)(1<r<2) \quad\text{for~any} \ 0<T<\infty.
\end{equation}
Applying (\ref{cha64}) into  Lemma 2.5, we get
\begin{equation}\label{P007}
u\in L^\infty(0,T;W^{2,r}(\Omega,\mathbb{R}^2)),\ p\in L^\infty(0,T;W^{1,r}(\Omega)).
\end{equation}

Using the Sobolev embedding theorem $W^{2,r}\hookrightarrow C^\alpha\hookrightarrow C^0(0<\alpha<1,n=2,)$, we deduce from (\ref{cha61}) and (\ref{P007}) that
\begin{equation}\label{cha65}
(u\cdot \nabla )u \in L^2(\Omega,\mathbb{R}^2)\quad \quad\text{for~any} \ 0<T<\infty.
\end{equation}

By (\ref{cha6}), (\ref{cha612}) and  (\ref{cha65}), we get that
\begin{equation}\label{cha05}
\widetilde{F}=f-(u\cdot\nabla)u+\frac{\rho_e}{\rho_0}(u\times\text{rot}A)-u_t\in L^\infty(\Omega,L^2(\Omega,\mathbb{R}^2)).
\end{equation}
Applying  (\ref{cha05}) into  Lemma 2.5,  we obtain that for any $T>0$
\begin{equation}\label{P07}
u\in L^\infty(0,T;W^{2,2}(\Omega,\mathbb{R}^2)),\ p\in L^\infty(0,T;W^{1,2}(\Omega)).
\end{equation}
Therefore, (\ref{P0}), (\ref{P4}), (\ref{P6}),  (\ref{cha6}) and (\ref{P07}) complete the proof.

 \end{proof}

\newpage

\end{document}